\documentclass[11pt]{article}
\usepackage{amsfonts,latexsym,rawfonts,amsmath,amssymb,amsthm,graphicx}
\numberwithin{equation}{section}
\newtheorem{theorem}{Theorem}[section]

\newtheorem{thm}[theorem]{Theorem}
\newtheorem{pro}[theorem]{Proposition}
\newtheorem{cor}[theorem]{Corollary}

\title{New inequalities for eigenvalues of the Dirichlet Laplacian on the hyperbolic space}
\author{Yong Luo
\\ \\In Commemoration of the 80th Birthday of Professor Weiyue Ding}
\date{}
\begin{document}
	\maketitle
	\begin{abstract} In this paper, motivated by study on universal inequalities for eigenvalues of the Dirichlet Laplacian, we prove some new inequalities for eigenvalues of the Dirichlet Laplacian on the hyperbolic space. In particular, we verify Cheng's conjecture (Adv. Lect. Math. 37, 2017) up to loss of $\epsilon$ for two special kinds of bounded domains in the hyperbolic space.
	\end{abstract}
\textbf{AMS subject classifications:} 58J50, 58E11, 35P15.
\\\textbf{Key words:} Eigenvalues, universal inequality, the Dirichlet Laplacian, the hyperbolic space.
	\section{Introduction}
Let $(M^n, g)$  be an $n$-dimensional complete Riemannian manifold and $\Omega$  a bounded connected domain in $M$. Let  $\Delta$ denote the Laplacian  acting on functions on $ M $. Eigenvalues of the Dirichlet Laplacian is stated as:
     	\begin{eqnarray}\label{Dirichlet problem}
     	\left\{\begin{array}{c}
     		\Delta u=-\lambda u \text { in } \Omega, \\
     		u|_{\partial \Omega}=0 .
     	\end{array}\right.
     \end{eqnarray}
     \indent
     Let
     \begin{eqnarray*}
     	0<\lambda_{1}<\lambda_{2} \leq \lambda_{3} \leq \cdots
     \end{eqnarray*}
     denote the successive eigenvalues of (\ref{Dirichlet problem}). Here each eigenvalue is repeated according to its multiplicity. It is well-known that  Weyl's asymptotic formula holds (cf. \cite{Cha}):
\begin{eqnarray}\label{Weyl asy.}
\lambda_k\sim\frac{4\pi^2}{(\omega_n Vol\Omega)^\frac{2}{n}}k^\frac{2}{n}, \ k\to \infty,
\end{eqnarray}
where $\omega_n$ is the volume of the unit ball in $\mathbb{R}^n$.\\
     \indent
Generally, if an inequality of eigenvalues requires no hypotheses on the geometric quantities of the domain $\Omega$ (other than its dimension), it is called an universal inequality. Universal inequalities constitute an important research area in the study of the spectrum of $\Delta$ and many works have been done during the past decades. Now we give a brief introduction of the main results.\\
     \indent
     The study of universal inequalities for (\ref{Dirichlet problem}) was initiated by Payne, P\"olya, and Weinberger \cite{PPW} in 1956, they proved
     \begin{eqnarray*}
     	\lambda_{k+1}-\lambda_{k} \leq \frac{4}{k n} \sum_{i=1}^{k} \lambda_{i}
     \end{eqnarray*}
     for $ \Omega \subset \mathbb{R}^{n}$ (they originally proved the case $n=2$, but their result can promoted to dimension $n$ exactly by the same proof  ).\\
     \indent
     In 1980, Hile and Protter \cite{HP} proved
     \begin{eqnarray*}
      \sum_{i=1}^{k} \frac{\lambda_{i}}{\lambda_{k+1}-\lambda_{i}} \geq \frac{k n}{4}.
     \end{eqnarray*}\\
     \indent
      In 1991, Yang \cite{Yang} (cf. \cite{CY1'}) proved a sharp universal inequality:
     \begin{eqnarray}\label{Yang's ineq.}
     	\sum_{i=1}^{k}\left(\lambda_{k+1}-\lambda_{i}\right)^2\leq\frac{4}{n}\sum_{i=1}^{k}\left(\lambda_{k+1}-\lambda_i\right) \lambda_{i},
     \end{eqnarray}
     which has been  called Yang's first inequality by Ashbaugh (cf. \cite{Ash1} and \cite{Ash2} and so on).

\indent
When $\Omega$ is a bounded domain on $\mathbb{S}^{n}(1)$, Cheng and Yang \cite{CY1} proved in 2005 that eigenvalues of  (\ref{Dirichlet problem}) satisfy
\begin{eqnarray}\label{Cheng's 2}
		\sum_{i=1}^{k}\left(\lambda_{k+1}-\lambda_{i}\right)^{2} \leq\frac{4}{n} \sum_{i=1}^{k}\left(\lambda_{k+1}-\lambda_{i}\right)\left(\lambda_i+\frac{n^2}{4}\right),
\end{eqnarray}
by canonically embedded $\mathbb{S}^{n}(1)$  into $\mathbb{R}^{n+1}$ (see also \cite{Chen} and \cite{Sou}). It is optimal since the above inequality becomes an equality for any $k$ when $\Omega=\mathbb{S}^n(1)$.

It is very natural to consider universal inequalities for eigenvalues of (\ref{Dirichlet problem}) when $M^n$ is the hyperbolic space $\mathbb{H}^{n}(-1)$. If $n=2$, by making use of estimates for eigenvalues of the Schr\"odinger like operator with a weight, Harrell and Michel \cite{HM} and Ashbaugh \cite{Ash2}
have obtained several results. For general $n$, Cheng and Yang \cite{CY2} proved in 2009 that eigenvalues of (\ref{Dirichlet problem}) satisfy
\begin{eqnarray}\label{Cheng's 1}
	\sum_{i=1}^{k}\left(\lambda_{k+1}-\lambda_{i}\right)^{2} \leqslant 4 \sum_{i=1}^{k}\left(\lambda_{k+1}-\lambda_{i}\right)\left(\lambda_{i}-\frac{(n-1)^{2}}{4}\right).
\end{eqnarray}

It remains a challenging problem if one can improve the coefficient on the right hand side of $(\ref{Cheng's 1})$ from $4$ to $\frac{4}{n}$ (which is sharp by making use of a recursion formula of Cheng and Yang \cite{CY1'} and Weyl's asymptotic formula (\ref{Weyl asy.})). Actually we have a conjecture raised by Cheng \cite{Che}.
\\
\\\textbf{Conjecture:} Assume that $\lambda_i$ are eigenvalues of  (\ref{Dirichlet problem}) with $(M^n, g)$ being the hyperbolic space $\mathbb{H}^{n}(-1)$. Then have we
\begin{eqnarray*}
	\sum_{i=1}^{k}\left(\lambda_{k+1}-\lambda_{i}\right)^{2} \leqslant \frac{4}{n} \sum_{i=1}^{k}\left(\lambda_{k+1}-\lambda_{i}\right)\left(\lambda_{i}-\frac{(n-1)^{2}}{4}\right).
\end{eqnarray*}	

Recently, motivated by Cheng's conjecture, Luo and Zheng \cite{LuoZ} studied eigenvalues of the Dirichlet Laplacian on the hyperbolic space $\mathbb{H}^{n}(-1)$, by viewing the hyperbolic space as a conformally flat Riemannian manifold. They proved 
\begin{thm}[\cite{LuoZ}]
		Assume that $(M^n,g )$ is the hyperbolic space $\mathbb{H}^{n}(-1)=(\mathbb{R}_{+}^{n}, \frac{d x_{1}^{2}+d x_{2}^{2}+\cdots+d x_{n}^{2}}{x_{n}^{2}})$, then eigenvalues $\lambda_{i}$ of (\ref{Dirichlet problem}) satisfy
		\begin{eqnarray}
			\sum_{i=1}^k(\lambda_{k+1}-\lambda_i)^2\leqslant \frac{\rho_{max}}{\rho_{min}}\frac{4}{n}\sum_{i=1}^k\left(\lambda_{k+1}-\lambda_i\right)\left(\lambda_i-\frac{n^{2}-2n-4}{4}\right),
		\end{eqnarray}
where $\rho_{max}:=\max_{\Omega}\frac{1}{x_n^2}$ and $\rho_{min}:=\min_\Omega\frac{1}{x_n^2}$.
	\end{thm}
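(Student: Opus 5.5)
The plan is to use the standard Yang-type commutator argument with the Euclidean coordinate functions $x_1,\dots,x_n$ of the upper half-space model as test functions. Because $g$ is conformally flat, all the geometric quantities in this argument can be computed explicitly.

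\textbf{Step 1: the abstract inequality.} I take as known the inequality of Cheng--Yang type. Let $u_i$ be orthonormal Dirichlet eigenfunctions and let $h$ be smooth on $\overline\Omega$. Then
\begin{eqnarray*}
\sum_{i=1}^k(\lambda_{k+1}-\lambda_i)^2\int_\Omega u_i^2|\nabla h|^2\,dv\le\sum_{i=1}^k(\lambda_{k+1}-\lambda_i)\int_\Omega\big(2\nabla h\cdot\nabla u_i+u_i\Delta h\big)^2dv .
\end{eqnarray*}
It follows from the Rayleigh--Ritz principle applied to $h u_i-\sum_j a_{ij}u_j$, together with a Cauchy--Schwarz argument. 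I apply it with $h=x_\alpha$ and sum over $\alpha=1,\dots,n$.

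\textbf{Step 2: the model quantities.} For $g=x_n^{-2}\delta$ we have $dv=x_n^{-n}dx$ and $\nabla x_\alpha\cdot\nabla f=x_n^2\partial_\alpha f$, so in particular $|\nabla x_\alpha|^2=x_n^2$. The Laplacian is
\begin{eqnarray*}
\Delta f=x_n^2\Delta_0 f-(n-2)x_n\partial_n f .
\end{eqnarray*}
Hence $\Delta x_\alpha=0$ for $\alpha<n$ and $\Delta x_n=-(n-2)x_n$. Write $w_i:=\int_\Omega x_n^2u_i^2\,dv$. Then the left-hand side, summed over $\alpha$, equals $n\sum_i(\lambda_{k+1}-\lambda_i)^2w_i$. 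On the right-hand side, summing over $\alpha$ gives
\begin{eqnarray*}
4\int x_n^2|\nabla u_i|^2dv-4(n-2)\int x_n^3u_i\partial_nu_i\,dv+(n-2)^2w_i .
\end{eqnarray*}
For the cross term, write it as $-2(n-2)\int x_n^{3-n}\partial_n(u_i^2)\,dx$. Integrating by parts (using $u_i|_{\partial\Omega}=0$) turns it into $2(n-2)(3-n)w_i$. For the weighted gradient term, I use $\int h|\nabla u|^2=\lambda\int hu^2+\frac12\int u^2\Delta h$ with $h=x_n^2$. Since $\Delta(x_n^2)=(6-2n)x_n^2$, this gives
\begin{eqnarray*}
\int x_n^2|\nabla u_i|^2dv=(\lambda_i+3-n)w_i .
\end{eqnarray*}
Collecting terms, the right-hand side equals $4w_i\big(\lambda_i-\frac{n^2-2n-4}{4}\big)$, because $(n-2)(n-4)+4(n-3)=n^2-2n-4$.

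\textbf{Step 3: removing the weights.} At this point we have
\begin{eqnarray*}
n\sum_i(\lambda_{k+1}-\lambda_i)^2w_i\le 4\sum_i(\lambda_{k+1}-\lambda_i)w_i\Big(\lambda_i-\tfrac{n^2-2n-4}{4}\Big).
\end{eqnarray*}
Since $\|u_i\|=1$, we have $1/\rho_{max}\le w_i\le 1/\rho_{min}$. On the left I bound $w_i$ below by $1/\rho_{max}$. On the right I bound $w_i$ above by $1/\rho_{min}$, which requires the factor $\lambda_i-\frac{n^2-2n-4}{4}$ to be nonnegative. This holds because McKean's inequality gives $\lambda_i\ge\lambda_1\ge\frac{(n-1)^2}{4}$, and $\frac{(n-1)^2}{4}>\frac{n^2-2n-4}{4}$. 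Multiplying through by $\rho_{max}/n$ then yields the statement.

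\textbf{Main obstacle.} The delicate point is Step 2. Both sides must reduce to the \emph{same} weight $w_i$, which is what lets the ratio $\rho_{max}/\rho_{min}$ appear only once. The exact integration by parts of the weighted gradient term is what makes this work; a crude bound $x_n^2\le1/\rho_{min}$ would give a worse constant. The other point to check is the sign condition in Step 3.
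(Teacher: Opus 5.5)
Your argument is correct. The model computations check out: $\Delta(x_n^2)=(6-2n)x_n^2$, the cross term equals $2(n-2)(3-n)w_i$, and the constants collect to $-(n^2-2n-4)$. The final weight removal is legitimate because $\lambda_i\ge\frac{(n-1)^2}{4}$.

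It is not the route taken in this paper, though. The paper does not prove the statement directly; it gets it from the stronger Theorem 1.2, which has $\frac{(n-1)^2}{4}$ in place of $\frac{n^2-2n-4}{4}$. Since $\lambda_{k+1}-\lambda_i\ge 0$ and $(n-1)^2>n^2-2n-4$, that right-hand side is smaller.

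In the paper's proof the $n$-th test function is $\ln x_n$ rather than $x_n$. Because $|\nabla\ln x_n|=1$ and $\Delta\ln x_n=1-n$ is constant, that inequality carries no weight at all. Its right-hand side is $4\big(\int_\Omega x_n^2(\partial_nu_i)^2-\frac{(n-1)^2}{4}\big)$. For $x_1,\dots,x_{n-1}$ the weights are stripped off individually before summing, via $\int_\Omega x_n^2u_i^2\ge\min_\Omega x_n^2$ and $\int_\Omega x_n^4(\partial_pu_i)^2\le\max_\Omega x_n^2\int_\Omega x_n^2(\partial_pu_i)^2$. Summing then reassembles $\int_\Omega|\nabla u_i|^2=\lambda_i$ directly, so no weighted identity for $\int_\Omega x_n^2|\nabla u_i|^2$ is needed.

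What each route buys:
\begin{itemize}
\item Your version gives a clean intermediate inequality in which both sides carry exactly the same weight $w_i$. All loss is confined to one final step, and it lands precisely on the constant $n^2-2n-4$ of the statement.
\item The paper's version gives the sharper constant $(n-1)^2$, the bottom of the spectrum of $\mathbb{H}^n(-1)$. It needs no sign input, since the relevant right-hand summands are integrals of squares.
\end{itemize}

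Incidentally, the same computation gives
\[
0\le\int_\Omega(2x_n\partial_nu_i+(1-n)u_i)^2=4\int_\Omega x_n^2(\partial_nu_i)^2-(n-1)^2\le4\lambda_i-(n-1)^2 ,
\]
which yields $\lambda_i\ge\frac{(n-1)^2}{4}$. So you could make your Step 3 self-contained without citing McKean.
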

In this paper, by choosing new test functions, we obtain new inequalities for eigenvalues of the Dirichlet Laplacian on the hyperbolic space. 
\begin{thm}\label{main thm1}
Assume that $(M^n,g)$ is the hyperbolic space $\mathbb{H}^{n}(-1)=(\mathbb{R}_{+}^{n}, \frac{d x_{1}^{2}+d x_{2}^{2}+\cdots+d x_{n}^{2}}{x_{n}^{2}})$, then eigenvalues $\lambda_{i}$ of (\ref{Dirichlet problem}) satisfy
		\begin{eqnarray}
			\sum_{i=1}^k(\lambda_{k+1}-\lambda_i)^2\leqslant \frac{\rho_{max}}{\rho_{min}}\frac{4}{n}\sum_{i=1}^k\left(\lambda_{k+1}-\lambda_i\right)\left(\lambda_i-\frac{(n-1)^2}{4}\right).
		\end{eqnarray}
	\end{thm}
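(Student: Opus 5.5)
The plan is to run the standard Cheng–Yang/Yang argument with the coordinate functions of the upper half-space model as test functions, while keeping careful track of the conformal factor $\rho=x_n^{-2}$. The constant $(n-1)^2/4$ should then come out of one integration by parts in the $x_n$-direction.

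\textbf{Setup.} Write $g=\rho\,\delta$ with $\rho=x_n^{-2}$, $dv=x_n^{-n}dx$, and
\[
\Delta_g u=x_n^2\Delta_0 u-(n-2)x_n\partial_n u .
\]
Let $u_1,\dots,u_k$ be orthonormal eigenfunctions in $L^2(\Omega,dv)$. The first step is the general algebraic lemma (Cheng–Yang, \cite{CY1'}), obtained by Rayleigh–Ritz with trial functions $\varphi_i=h u_i-\sum_j a_{ij}u_j$. For any smooth $h$ it gives
\[
\sum_{i=1}^k(\lambda_{k+1}-\lambda_i)^2\int_\Omega u_i^2|\nabla h|_g^2\,dv\le \sum_{i=1}^k(\lambda_{k+1}-\lambda_i)\int_\Omega\big(2\langle\nabla h,\nabla u_i\rangle_g+u_i\Delta_g h\big)^2dv .
\]
I would apply this with $h=h_\alpha$, $\alpha=1,\dots,n$, and sum over $\alpha$. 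The first candidates are $h_\alpha=x_\alpha$. If the $x_n$-term turns out badly, I would replace $h_n$ by a modified function such as $\log x_n$, or conjugate $u_i$ by a power $x_n^{s}$ with $s$ tuned at the end. These are the "new test functions".

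\textbf{Left side.} For $h_\alpha=x_\alpha$ one has $\sum_\alpha|\nabla x_\alpha|_g^2=n x_n^2=n/\rho\ge n/\rho_{max}$. Hence the left side is at least
\[
\frac{n}{\rho_{max}}\sum_i(\lambda_{k+1}-\lambda_i)^2 .
\]

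\textbf{Right side.} Here $\langle\nabla x_\alpha,\nabla u\rangle_g=x_n^2\partial_\alpha u$, $\Delta_g x_\alpha=0$ for $\alpha<n$, and $\Delta_g x_n=-(n-2)x_n$. The right-hand integrand summed over $\alpha$ is therefore
\[
4x_n^4|\nabla_0u|^2-4(n-2)x_n^3u\,\partial_nu+(n-2)^2x_n^2u^2 .
\]
I would pull out one factor $x_n^2=1/\rho\le 1/\rho_{min}$, leaving
\[
4x_n^2|\nabla_0u|^2-2(n-2)x_n\,\partial_n(u^2)+(n-2)^2u^2
\]
integrated against $dv$. Since $x_n^2|\nabla_0u|^2=|\nabla u|_g^2$, the first term integrates to $4\lambda_i$. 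For the cross term, integrate by parts using $\partial_n(x_n^{1-n})=(1-n)x_n^{-n}$; this turns it into a multiple of $\int u^2dv$.

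\textbf{Main obstacle.} The delicate point is that the factor $1/\rho_{min}$ can only be pulled out of a pointwise nonnegative integrand. The natural expression
\[
4\lambda_i-(\text{const})\int u_i^2\,dv
\]
is not pointwise nonnegative. To fix this I would complete the square, rewriting the integrand as
\[
4x_n^2\big(\partial_nu+c\,x_n^{-1}u\big)^2+4x_n^2\sum_{\alpha<n}(\partial_\alpha u)^2
\]
for a suitable $c$. That expression is pointwise nonnegative, so the bound by $1/\rho_{min}$ is legitimate. After integrating the square out, it should equal $4\big(\lambda_i-\tfrac{(n-1)^2}{4}\big)$; the choice $c=\tfrac{n-1}{2}$ is the one that produces the sharp Hardy/McKean constant.

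Making the completed square match both the cross term and the $u^2$-term exactly is where the modified $h_n$ or the conjugation $x_n^{s}u$ is needed. I expect this bookkeeping to be the real work of the proof. Once it is done, combining the left-side bound with the bound $\le \frac{4}{\rho_{min}}\sum_i(\lambda_{k+1}-\lambda_i)(\lambda_i-\frac{(n-1)^2}{4})$ yields the theorem.
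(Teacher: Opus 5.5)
\textbf{There is a genuine gap.} Your framework (Cheng--Yang's inequality with coordinate-type test functions, conformal weights bounded by $\rho_{max},\rho_{min}$) is the right one. However, the step that produces $\frac{(n-1)^2}{4}$ is exactly what you defer as ``bookkeeping'', and the mechanism you propose for it does not work.

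First, the obstacle you identify is not real. With $h_\alpha=x_\alpha$ for all $\alpha$, the summed integrand equals $4x_n^4\sum_{\alpha<n}(\partial_\alpha u)^2+x_n^2\bigl(2x_n\partial_nu-(n-2)u\bigr)^2$. This is already a sum of squares, so pulling out $x_n^2\le 1/\rho_{min}$ before integrating by parts is legitimate.

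But your own integration by parts then gives $\int_\Omega\bigl(2x_n\partial_nu_i-(n-2)u_i\bigr)^2dv=4\int_\Omega x_n^2(\partial_nu_i)^2dv-n(n-2)$. So this route ends at $\lambda_i-\frac{n(n-2)}{4}$, which is strictly weaker than the theorem. You cannot afterwards re-complete the square with a free $c$: once $h$ is chosen, the right-hand side of Cheng--Yang's inequality is fixed, and $h_n=x_n$ forces $c=-\frac{n-2}{2}$. Also, the value giving $-(n-1)^2$ is $c=-\frac{n-1}{2}$; your $c=+\frac{n-1}{2}$ gives $+3(n-1)^2$.

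Tuning a power does not rescue it either. The choice $h_n=x_n^s$ yields $\lambda_i-\frac{(n-1)^2-s^2}{4}$ together with the factor $(\rho_{max}/\rho_{min})^s$. So the sharp constant appears only in the limit $s\to0$, i.e.\ for $h_n=\ln x_n$.

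\textbf{The missing idea} is to take $f_n=\ln x_n$ and to handle the $n$-th direction separately instead of summing first. Here $|\nabla f_n|_g=1$, $\nabla f_n\cdot\nabla u=x_n\partial_nu$ and $\Delta f_n=1-n$. Cheng--Yang's inequality plus your integration by parts then give, with no weight anywhere, $\sum_i(\lambda_{k+1}-\lambda_i)^2\le4\sum_i(\lambda_{k+1}-\lambda_i)\bigl(\int_\Omega x_n^2(\partial_nu_i)^2dv-\frac{(n-1)^2}{4}\bigr)$.

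Because the left-hand weight is now $1$ rather than $x_n^2$, your scheme no longer applies: you cannot bound $\sum_\alpha|\nabla h_\alpha|_g^2\ge n/\rho_{max}$ and extract one factor $x_n^2$ from the whole right side. Instead, proceed direction by direction:
\begin{itemize}
\item For each $p<n$, normalize $h_p=x_p$ on its own to get $\sum_i(\lambda_{k+1}-\lambda_i)^2\le\frac{\rho_{max}}{\rho_{min}}4\sum_i(\lambda_{k+1}-\lambda_i)\int_\Omega x_n^2(\partial_pu_i)^2dv$.
\item Multiply the $\ln x_n$ bound by $\rho_{max}/\rho_{min}\ge1$. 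This is allowed because its right side is a nonnegative combination of integrals of squares.
\end{itemize}
Adding these $n$ inequalities and using $x_n^2|\nabla_0u_i|^2=|\nabla u_i|_g^2$ gives the theorem. This is how the paper argues.
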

Since $(n-1)^2>n^{2}-2n-4$, we see that Theorem \ref{main thm1} improves Theorem 1.1 due to Luo and Zheng. As a direct corollary, we verify Cheng's conjecture up to loss of $\epsilon$  for a special kind of bounded domains.
\begin{cor}
Assume that $\Omega$ is a bounded domain in the hyperbolic space $\mathbb{H}^{n}(-1)=(\mathbb{R}_{+}^{n}, \frac{d x_{1}^{2}+d x_{2}^{2}+\cdots+d x_{n}^{2}}{x_{n}^{2}})$ satisfying $$\frac{\rho_{max}}{\rho_{min}}\leq(1+\epsilon),$$ then eigenvalues $\lambda_{i}$ of (\ref{Dirichlet problem}) satisfy
		\begin{eqnarray}
			\sum_{i=1}^k(\lambda_{k+1}-\lambda_i)^2\leqslant \frac{4(1+\epsilon)}{n}\sum_{i=1}^k\left(\lambda_{k+1}-\lambda_i\right)\left(\lambda_i-\frac{(n-1)^2}{4}\right).
		\end{eqnarray}
\end{cor}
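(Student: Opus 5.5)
The Corollary is an immediate consequence of Theorem \ref{main thm1}; nothing new has to be proved.

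First we record that the right-hand side of Theorem \ref{main thm1} is nonnegative, so that it may be scaled up by a larger constant. For $i\le k$ we have $\lambda_{k+1}-\lambda_i\ge 0$. Moreover, McKean's inequality gives $\lambda_1(\Omega)\ge \frac{(n-1)^2}{4}$ for every bounded domain $\Omega\subset\mathbb{H}^n(-1)$; in fact the inequality is strict, since $\frac{(n-1)^2}{4}$ is the bottom of the spectrum of $\mathbb{H}^n(-1)$ and is not attained on a bounded domain. Hence every factor $\lambda_i-\frac{(n-1)^2}{4}$ is positive, and
\[
S:=\sum_{i=1}^k(\lambda_{k+1}-\lambda_i)\Bigl(\lambda_i-\frac{(n-1)^2}{4}\Bigr)\ge 0 .
\]
If one prefers not to cite McKean, nonnegativity also follows from Theorem \ref{main thm1} itself. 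Its left-hand side is nonnegative, and the prefactor $\frac{\rho_{max}}{\rho_{min}}\frac{4}{n}$ is positive, so $S\ge0$ automatically.

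Next we apply Theorem \ref{main thm1} to $\Omega$ and use the hypothesis $\frac{\rho_{max}}{\rho_{min}}\le 1+\epsilon$ together with $S\ge0$:
\[
\sum_{i=1}^k(\lambda_{k+1}-\lambda_i)^2\le \frac{\rho_{max}}{\rho_{min}}\frac{4}{n}S\le \frac{4(1+\epsilon)}{n}S .
\]
This is exactly the inequality claimed in the Corollary.

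The only point that needs any care is the sign of $S$, since we are multiplying by a larger constant. As shown above, this is handled either by McKean's bound or directly by Theorem \ref{main thm1}.
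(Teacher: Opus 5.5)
Your proof is correct and follows the paper's route: the paper presents the Corollary as a direct consequence of Theorem \ref{main thm1}, replacing $\frac{\rho_{max}}{\rho_{min}}$ by the larger constant $1+\epsilon$. Your check that $S\ge 0$ supplies the one detail the paper leaves implicit, and deriving it from Theorem \ref{main thm1} itself (nonnegative left-hand side, positive prefactor) is the cleanest way, so the appeal to McKean's inequality is unnecessary.
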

By choosing different test functions, we verify Cheng's conjecture up to loss of $\epsilon$ for another special kind of bounded domains.
\begin{thm}\label{main thm2}
Assume that $(M^n,g )$ is the hyperbolic space $\mathbb{H}^{n}(-1)=(\mathbb{R}_{+}^{n}, \frac{d x_{1}^{2}+d x_{2}^{2}+\cdots+d x_{n}^{2}}{x_{n}^{2}})$. Then for any $0<\epsilon\leq 1 (n=2)$ or $0<\epsilon\leq 2 (n\geq 3)$, if $\Omega$ is a bounded domain in $\mathbb{H}^n(-1)$ satisfying $$\max_{x\in\Omega}\sum_{p=1}^{n-1}\frac{(\frac{x_p}{x_n})^2}{1+(\frac{x_p}{x_n})^2}\leq\frac{\epsilon^2}{1+\epsilon},$$ then eigenvalues $\lambda_i$ of  (\ref{Dirichlet problem}) satisfy
	    $$\sum_{i=1}^{k}\left(\lambda_{k+1}-\lambda_{i}\right)^{2}\leq \frac{4(1+\epsilon)}{n}\sum_{i=1}^k(\lambda_{k+1}-\lambda_{i})(\lambda_i-\frac{(n-1)^2}{4}).$$
\end{thm}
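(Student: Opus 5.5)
The plan is to run the Cheng--Yang Rayleigh--Ritz argument with $n$ test functions adapted to the hyperbolic geometry, each with gradient of norm one. The functions are
$$h_n=\log x_n,\qquad h_p=\operatorname{arcsinh}\Big(\frac{x_p}{x_n}\Big),\quad p=1,\dots,n-1,$$
where $h_n$ is a Busemann function and $h_p$ is the signed distance to the totally geodesic hyperplane $\{x_p=0\}$. We use
$$\Delta=x_n^2\sum_j\partial_j^2-(n-2)x_n\partial_n .$$
Setting $t_p=x_p/x_n$, one checks $|\nabla h_a|=1$ for every $a$, together with
$$\Delta h_n=-(n-1),\qquad \Delta h_p=(n-1)\tanh h_p,\qquad \tanh^2 h_p=\frac{t_p^2}{1+t_p^2}.$$
For each $a$ we use the trial functions $\varphi_i=h_au_i-\sum_j a_{ij}u_j$ in the Rayleigh quotient for $\lambda_{k+1}$. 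The standard algebra, namely orthogonality, the antisymmetry of $\int u_j\nabla h_a\cdot\nabla u_i$ combinations, and a parameter $\delta$ to be optimized, gives
$$\sum_{i=1}^k(\lambda_{k+1}-\lambda_i)^2\int u_i^2|\nabla h_a|^2\le \delta\sum_i(\lambda_{k+1}-\lambda_i)^2\int u_i^2|\nabla h_a|^2+\frac1\delta\sum_i(\lambda_{k+1}-\lambda_i)\int\Big(\nabla h_a\cdot\nabla u_i+\tfrac12u_i\Delta h_a\Big)^2 .$$

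Next I sum over $a=1,\dots,n$. Since $|\nabla h_a|=1$, the left side sums to $n\sum_i(\lambda_{k+1}-\lambda_i)^2$. On the right, I expand the square and integrate the cross term by parts:
$$\int u\,\Delta h\,\nabla h\cdot\nabla u=-\tfrac12\int u^2\big(\nabla\Delta h\cdot\nabla h+(\Delta h)^2\big).$$
For $h_n$ this yields exactly $\int(\nabla h_n\cdot\nabla u)^2-\frac{(n-1)^2}{4}$, which is the source of the constant $\frac{(n-1)^2}{4}$. For $h_p$ the analogous term equals
$$-\tfrac14(n-1)^2\tanh^2h_p-\tfrac12(n-1)\,\mathrm{sech}^2h_p .$$
This is $\le 0$, so it may be dropped, or partly kept to help later.

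The gradient terms then reduce to bounding $\sum_a(\nabla h_a\cdot v)^2$ for a vector $v$. In the orthonormal frame $x_n\partial_j$ one has $\nabla h_n=e_n$ and $\nabla h_p=(e_p-t_pe_n)/\sqrt{1+t_p^2}$. The Gram matrix of $\{\nabla h_a\}$ is therefore $I+E$, and the entries of $E$ are
$$\langle\nabla h_p,\nabla h_n\rangle=-\tanh h_p,\qquad \langle\nabla h_p,\nabla h_q\rangle=\tanh h_p\tanh h_q .$$
Equivalently, the map $v\mapsto(\nabla h_a\cdot v)_a$ is the identity plus a perturbation. I expect its operator norm squared to be bounded by $1+\sqrt{s}+\dots$, or more precisely $\le 1+\epsilon$, when
$$s:=\sum_p\frac{t_p^2}{1+t_p^2}\le\frac{\epsilon^2}{1+\epsilon}.$$
The specific form $\epsilon^2/(1+\epsilon)$ strongly suggests that the sharp bound is obtained by computing the largest eigenvalue of the rank-two-type perturbation explicitly. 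That largest eigenvalue solves a quadratic $\mu^2-(\dots)\mu-s\,(\dots)=0$, and the condition $s\le\epsilon^2/(1+\epsilon)$ is exactly what makes the root $\le 1+\epsilon$. Combined with $\int|\nabla u_i|^2=\lambda_i$, this gives
$$\sum_a\int(\nabla h_a\cdot\nabla u_i)^2\le(1+\epsilon)\lambda_i .$$

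Collecting terms gives
$$n\sum_i(\lambda_{k+1}-\lambda_i)^2\le4\sum_i(\lambda_{k+1}-\lambda_i)\Big((1+\epsilon)\lambda_i-\tfrac{(n-1)^2}{4}+R_i\Big)$$
after optimizing $\delta$. Here $R_i$ collects whatever is left from the $h_p$ Laplacian terms; after the sign computation above I expect $R_i\le0$, or at worst an error of size $O(s)$. The last step is to compare this with the target $(1+\epsilon)\big(\lambda_i-\frac{(n-1)^2}{4}\big)$. The difference is $\epsilon\frac{(n-1)^2}{4}-R_i$, so one needs $R_i\le\epsilon\frac{(n-1)^2}{4}$. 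I expect the restrictions $\epsilon\le1$ for $n=2$ and $\epsilon\le2$ for $n\ge3$ to enter exactly in balancing the leftover $\mathrm{sech}^2$ and $\tanh^2$ contributions against this gap. McKean's bound $\lambda_i\ge\frac{(n-1)^2}{4}$ can be used to keep all factors nonnegative.

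The main obstacle is the sharp operator-norm estimate for the non-orthonormal family $\{\nabla h_a\}$ in terms of $s$. Closely tied to it is the bookkeeping that makes the $h_p$ Laplacian contributions fit under the $\epsilon$-dependent constraint. If the direct Gram computation is too lossy, I would instead apply Cauchy--Schwarz with a weight $\eta$: write $\nabla h_p\cdot v=(v_p-t_pv_n)/\sqrt{1+t_p^2}$, bound $(a+b)^2\le(1+\eta)a^2+(1+\eta^{-1})b^2$, and choose $\eta=\epsilon$. This produces exactly a condition of the form $s(1+\epsilon^{-1})\le\epsilon$, that is, $s\le\epsilon^2/(1+\epsilon)$.
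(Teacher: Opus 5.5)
The gap is in your last step: the required inequality has the wrong sign. Your collected bracket $(1+\epsilon)\lambda_i-\frac{(n-1)^2}{4}+R_i$ is at most the target $(1+\epsilon)\big(\lambda_i-\frac{(n-1)^2}{4}\big)$ if and only if $R_i\le-\epsilon\frac{(n-1)^2}{4}$, not $R_i\le\epsilon\frac{(n-1)^2}{4}$.

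So the $h_p$ Laplacian terms may \emph{not} be dropped. Without them you only get $(1+\epsilon)\lambda_i-\frac{(n-1)^2}{4}$, which misses the claim by $\epsilon\frac{(n-1)^2}{4}$. For the same reason, an ``error of size $O(s)$'' would be fatal rather than harmless. Your stated condition also contradicts your own expectation: if $R_i\le\epsilon\frac{(n-1)^2}{4}$ were what is needed, $R_i\le0$ would settle it for every $\epsilon>0$, and the restrictions $\epsilon\le1$, $\epsilon\le2$ could never arise. What you actually need is a quantitative \emph{lower} bound on $-R_i$, and the proposal does not supply one.

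The missing computation is short, and it is how the paper concludes. Set $A_i=\sum_{p}\int_\Omega u_i^2\tanh^2h_p$ and $B_i=\sum_{p}\int_\Omega u_i^2\,\mathrm{sech}^2h_p$. Then $-R_i=\frac{(n-1)^2}{4}A_i+\frac{n-1}{2}B_i$, and $A_i+B_i=n-1$ because $\tanh^2+\mathrm{sech}^2=1$ and $\int_\Omega u_i^2=1$. Hence $-R_i\ge\epsilon\frac{(n-1)^2}{4}$ is equivalent to $(n-3)A_i+2(n-1)\ge\epsilon(n-1)$.
\begin{itemize}
\item For $n\ge3$ this holds for $\epsilon\le2$ simply because $A_i\ge0$.
\item For $n=2$ it reads $2-A_i\ge\epsilon$. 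This follows from $A_i\le\frac{\epsilon^2}{1+\epsilon}$ (integrate the hypothesis against $u_i^2$) together with $2+\epsilon-2\epsilon^2\ge0$ for $\epsilon\le1$.
\end{itemize}
So the decisive term is the order-one $\mathrm{sech}^2$ contribution, not a small remainder.

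The rest of your plan is sound. The weighted Cauchy--Schwarz with $\eta=\epsilon$ is exactly the paper's gradient estimate. Your Gram-matrix route also works: any eigenvalue $\mu>1$ of $\sum_a\nabla h_a\otimes\nabla h_a$ satisfies $(\mu-1)^2-s(\mu-1)-s\le0$, and the positive root of this quadratic at $s=\frac{\epsilon^2}{1+\epsilon}$ is exactly $\mu-1=\epsilon$.
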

\textbf{Organization.} In section 2 we collect some formulas from conformal geometry which will be useful in section 3 and in seciton 3 we give proofs of our main Theorem \ref{main thm1} and Theorem \ref{main thm2}.
\section{Preliminaries}
 Let $(M,g_0)$ be an $n$-dimensional Riemannian manifold. Given a function $h$$:$ $M$$ \rightarrow$ $\mathbb{R}$, we consider  the conformal manifold $(M, g)$ with the conformal metric $g=e^{2h}g_0$. Let $\nabla^0$, $\nabla$ denote the Riemannian connections and $\Delta^0$, $\Delta$ denote the Laplacian on  $(M, g_0)$ and $(M,g)$ respectively. 

 We denote $$\nabla^0 F\cdot\nabla^0 G=\langle \nabla^0 F, \nabla^0 G\rangle_{g_0}, \ \nabla F\cdot \nabla G=\langle \nabla F, \nabla G\rangle_g$$ for any functions $F,G$  $:$ $\Omega$ $\rightarrow$ $\mathbb{R}$. Then we have
\begin{align}\label{con0}
\nabla F \cdot\nabla G
&=\langle\nabla F, \nabla G\rangle_g\nonumber
	\\&=e^{2 h}\left\langle e^{-2 h} \nabla^0 F, e^{-2 h} \nabla^0 G\right\rangle_{g_0}\nonumber
	\\&=e^{-2 h}\langle\nabla^0 F, \nabla^0 G\rangle_{g_0}\nonumber
	\\&=e^{-2 h} \nabla^0 F\cdot \nabla^0 G.
\end{align}

  There is a known formula of the Laplacian under conformal transformation, which states that for a smooth function $F$ $:$ $M$ $\rightarrow$ $\mathbb{R}$,
	\begin{eqnarray}\label{con1}
		\quad \Delta F=e^{-2 h}\left(\Delta^0 F+(n-2) \nabla^0 h \cdot \nabla^0 F\right).
	\end{eqnarray}

\section{Proofs of Theorem \ref{main thm1} and Theorem \ref{main thm2}}
In the following we will use a general inequality for eigenvalues of the Dirichlet Laplacian due to Cheng and Yang \cite{CY}.
\begin{pro}[\cite{CY}]
Let $\lambda_i$ be the $i$-th eigenvalue of the above eigenvalue problem \ref{Dirichlet problem}
and $u_i$ be the orthonormal eigenfunction corresponding to $\lambda_i$, i.e.
\begin{eqnarray*}
\int_Mu_iu_i=\delta_{ij}, \int_M|\nabla u_i|^2=\lambda_i.
\end{eqnarray*}
 Then, for any function $f\in C^3(M)\cap C^2(\partial M)$ and any integer $k$, we have
\begin{eqnarray}\label{main ine}
\sum_{i=1}^k(\lambda_{k+1}-\lambda_i)^2\|u_i\nabla f\|^2\leq\sum_{i=1}^k(\lambda_{k+1}-\lambda_i)\|2\nabla f\cdot\nabla u_i+u_i\Delta f\|^2,
\end{eqnarray}
where $\|f\|^2=\int_Mf^2.$
\end{pro}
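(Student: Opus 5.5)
The plan is to combine the Rayleigh--Ritz characterization of $\lambda_{k+1}$, applied to the test functions $f u_i$ made orthogonal to the first $k$ eigenfunctions, with a Cauchy--Schwarz step, and then to check that all cross terms cancel after summing over $i$. Throughout, integrals are over $\Omega$ (the paper writes $\int_M$), $u_i$ are real $L^2$-orthonormal Dirichlet eigenfunctions, and $f$ is smooth enough that $f u_i\in H^1_0(\Omega)$.

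\textbf{Setup.} Put
\[
a_{ij}=\int f u_i u_j ,\qquad p_i=2\nabla f\cdot\nabla u_i+u_i\Delta f ,\qquad b_{ij}=\int u_j p_i ,
\]
and define
\[
\varphi_i=f u_i-\sum_{j=1}^k a_{ij}u_j ,
\]
so that $\varphi_i\perp u_1,\dots,u_k$ and $\varphi_i$ vanishes on $\partial\Omega$. Since $\Delta(f u_i)=-\lambda_i f u_i+p_i$, integrating $u_j\Delta(fu_i)$ by parts in two ways gives $-\lambda_j a_{ij}=-\lambda_i a_{ij}+b_{ij}$, that is,
\[
b_{ij}=(\lambda_i-\lambda_j)a_{ij}, \qquad a_{ij}=a_{ji}.
\]

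\textbf{Rayleigh--Ritz step.} The Rayleigh--Ritz inequality gives
\[
\lambda_{k+1}\|\varphi_i\|^2\le -\int \varphi_i\Delta\varphi_i .
\]
Because $\varphi_i\perp u_j$ for $j\le k$, the right-hand side equals $\lambda_i\|\varphi_i\|^2-\int\varphi_i p_i$. Hence
\[
(\lambda_{k+1}-\lambda_i)\|\varphi_i\|^2\le W_i,\qquad W_i:=-\int\varphi_i p_i .
\]
Next compute $W_i=-\int f u_i p_i+\sum_j a_{ij}b_{ij}$. Integrating $\int f u_i^2\Delta f$ by parts, the terms involving $f\nabla f\cdot\nabla(u_i^2)$ cancel and $-\int f u_i p_i=\|u_i\nabla f\|^2$. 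Therefore
\[
W_i=\|u_i\nabla f\|^2+\sum_j(\lambda_i-\lambda_j)a_{ij}^2 .
\]

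\textbf{Cauchy--Schwarz step.} Since $\varphi_i\perp u_j$, we have $W_i=-\int\varphi_i\bigl(p_i-\sum_j b_{ij}u_j\bigr)$. Cauchy--Schwarz and the Rayleigh--Ritz bound then give
\[
(\lambda_{k+1}-\lambda_i)^2W_i^2\le(\lambda_{k+1}-\lambda_i)^2\|\varphi_i\|^2\Bigl(\|p_i\|^2-\sum_j b_{ij}^2\Bigr)\le(\lambda_{k+1}-\lambda_i)W_i\Bigl(\|p_i\|^2-\sum_j b_{ij}^2\Bigr).
\]
If $W_i=0$, then $\varphi_i=0$ unless $\lambda_{k+1}=\lambda_i$, and either way the next inequality holds trivially. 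If $W_i>0$ (note $W_i\ge0$ by the Rayleigh--Ritz step), divide by $W_i$. In both cases
\[
(\lambda_{k+1}-\lambda_i)^2\Bigl(\|u_i\nabla f\|^2+\sum_j(\lambda_i-\lambda_j)a_{ij}^2\Bigr)\le(\lambda_{k+1}-\lambda_i)\Bigl(\|p_i\|^2-\sum_j(\lambda_i-\lambda_j)^2a_{ij}^2\Bigr).
\]

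\textbf{Summation and cancellation.} Summing over $i=1,\dots,k$, the leftover $a_{ij}^2$ terms combine into
\[
\sum_{i,j}(\lambda_{k+1}-\lambda_i)(\lambda_i-\lambda_j)a_{ij}^2\bigl[(\lambda_{k+1}-\lambda_i)+(\lambda_i-\lambda_j)\bigr]=\sum_{i,j}(\lambda_{k+1}-\lambda_i)(\lambda_{k+1}-\lambda_j)(\lambda_i-\lambda_j)a_{ij}^2 .
\]
This vanishes because the summand is antisymmetric in $(i,j)$ and $a_{ij}=a_{ji}$. What remains is exactly (\ref{main ine}).

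The main point requiring care is this final cancellation: one must keep the $\sum_j b_{ij}^2$ correction from the projection (rather than discarding it) so that it exactly balances the $a_{ij}^2$ terms in $W_i$.
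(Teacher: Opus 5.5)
Your proof is correct. The paper does not prove this proposition; it quotes it from Cheng and Yang. Your argument is essentially their standard one: trial functions $fu_i-\sum_j a_{ij}u_j$ in the Rayleigh--Ritz quotient, the relation $b_{ij}=(\lambda_i-\lambda_j)a_{ij}$, Cauchy--Schwarz against the projection $p_i-\sum_j b_{ij}u_j$, and antisymmetric cancellation of the $a_{ij}^2$ terms after summing over $i$. The Cauchy--Schwarz step is sometimes written as a weighted AM--GM with a free constant, and your division by $W_i\ge 0$ corresponds to the optimal choice of that constant.
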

 Let $\mathbb{H}^n(-1)$ be the hyperbolic space with the Poincar$\acute{e}$ upper half space model with Riemannian metric $g$, i.e. $g=\frac{1}{x_n^2}(dx_1^2+\cdots+dx_n^2)$.
\vspace{0.5 cm}

\textbf{Proof of Theorem \ref{main thm1}.} Let $f_p=x_p, p=1,\cdots, n-1$, using formulas (\ref{con0}) and (\ref{con1}) we have $$|\nabla f_p|^2=x_n^2, \ \nabla f_p\cdot\nabla u_i=x_n^2\frac{\partial u_i}{\partial x_p}, \ and \ \Delta f_p=0.$$ Then we get 
$$\sum_{i=1}^k(\lambda_{k+1}-\lambda_i)^2\int_\Omega u_i^2x_n^2\leq4\sum_{i=1}^k(\lambda_{k+1}-\lambda_i)\int_\Omega x_n^4(\frac{\partial u_i}{\partial x_p})^2.$$
Therefore we obtain
$$\min_\Omega x_n^2\sum_{i=1}^k(\lambda_{k+1}-\lambda_i)^2\int_\Omega u_i^2\leq\max_\Omega x_n^24\sum_{i=1}^k(\lambda_{k+1}-\lambda_i)\int_\Omega x_n^2(\frac{\partial u_i}{\partial x_p})^2,$$
which implies 
\begin{eqnarray}\label{main ine2}
\sum_{i=1}^k(\lambda_{k+1}-\lambda_i)^2\leq\frac{\max_\Omega x_n^2}{\min_\Omega x_n^2}4\sum_{i=1}^k(\lambda_{k+1}-\lambda_i)\int_\Omega x_n^2(\frac{\partial u_i}{\partial x_p})^2,
\end{eqnarray}
for each $p=1,\cdots,n-1.$ 

Let $f_n=\ln x_n,$ using formulas (\ref{con0}) and (\ref{con1}) we have 
$$|\nabla f_n|^2=1,\ \nabla f_n\cdot\nabla u_i=x_n\frac{\partial u_i}{\partial x_n}, \ and \ \Delta f_n=1-n.$$ 
Then we get
$$\sum_{i=1}^k(\lambda_{k+1}-\lambda_i)^2\int_\Omega u_i^2\leq\sum_{i=1}^k(\lambda_{k+1}-\lambda_i)\int_\Omega\left(2x_n\frac{\partial u_i}{\partial x_n}+u_i(1-n)\right)^2.$$
Note that
\begin{eqnarray*}
\int_\Omega\left(2x_n\frac{\partial u_i}{\partial x_n}+u_i(1-n)\right)^2&=&\int_\Omega\left(4x_n^2(\frac{\partial u_i}{\partial x_n})^2+4(1-n)\nabla x_n\cdot\nabla u_i u_i+(n-1)^2u_i^2\right)
\\&=&\int_\Omega\left(4x_n^2(\frac{\partial u_i}{\partial x_n})^2+2(1-n)\nabla x_n\cdot\nabla u_i^2+(n-1)^2u_i^2\right)
\\&=&\int_\Omega\left(4x_n^2(\frac{\partial u_i}{\partial x_n})^2-2(1-n)\Delta x_n u_i^2+(n-1)^2u_i^2\right)
\\&=&\int_\Omega\left(4x_n^2(\frac{\partial u_i}{\partial x_n})^2-(n-1)^2u_i^2\right),
\end{eqnarray*}
we obtain
\begin{eqnarray}\label{main ine3}
&&\sum_{i=1}^k(\lambda_{k+1}-\lambda_i)^2\nonumber
\\&\leq&4\sum_{i=1}^k(\lambda_{k+1}-\lambda_i)\left(\int_\Omega x_n^2(\frac{\partial u_i}{\partial x_n})^2-\frac{(n-1)^2}{4}\right)\nonumber
\\&\leq&\frac{\max_\Omega x_n^2}{\min_\Omega x_n^2}4\sum_{i=1}^k(\lambda_{k+1}-\lambda_i)\left(\int_\Omega x_n^2(\frac{\partial u_i}{\partial x_n})^2-\frac{(n-1)^2}{4}\right).
\end{eqnarray}
Summing $p$ from $1$ to $n-1$ in inequality  (\ref{main ine2}) and inequality (\ref{main ine3}) we get
\begin{eqnarray*}
&&n\sum_{i=1}^k(\lambda_{k+1}-\lambda_i)^2
\\&\leq&\frac{\max_\Omega x_n^2}{\min_\Omega x_n^2}4\sum_{i=1}^k(\lambda_{k+1}-\lambda_i)\left(\int_\Omega|\nabla u_i|^2-\frac{(n-1)^2}{4}\right)
\\&=&\frac{\max_\Omega x_n^2}{\min_\Omega x_n^2}4\sum_{i=1}^k(\lambda_{k+1}-\lambda_i)\left(\lambda_i-\frac{(n-1)^2}{4}\right),
\end{eqnarray*}
where in the above inequality we used $|\nabla u_i|^2=x_n^2\left(\frac{\partial u_i}{\partial x_1})^2+\cdots+(\frac{\partial u_i}{\partial x_n})^2\right).$
Therefore we have
\begin{eqnarray*}
&&\sum_{i=1}^k(\lambda_{k+1}-\lambda_i)^2
\\&\leq&\frac{\max_\Omega x_n^2}{\min_\Omega x_n^2}\frac{4}{n}\sum_{i=1}^k(\lambda_{k+1}-\lambda_i)\left(\lambda_i-\frac{(n-1)^2}{4}\right)
\\&=&\frac{\rho_{max}}{\rho_{min}}\frac{4}{n}\sum_{i=1}^k(\lambda_{k+1}-\lambda_i)\left(\lambda_i-\frac{(n-1)^2}{4}\right),
\end{eqnarray*}
which completes the proof of Theorem \ref{main thm1}.

\vspace{0.5 cm}

\textbf{Proof of Theorem \ref{main thm2}.}  Choosing $f_p=arcsinh\frac{x_p}{x_n}, p=1,\cdots, n-1$, then using formulas (\ref{con0}) and (\ref{con1}) direct computations  show that 
\begin{eqnarray*}
\nabla f_p&=&x_n^2\nabla_0f_p
\\&=&x_n^2(\frac{\partial f_p}{\partial x_p}\frac{\partial}{\partial x_p}+\frac{\partial f_p}{\partial x_n}\frac{\partial}{\partial x_n})
\\&=&x_n\frac{1}{\sqrt{1+(\frac{x_p}{x_n}})^2}\frac{\partial}{\partial x_p}-x_n\frac{x_p}{x_n}\frac{1}{\sqrt{1+(\frac{x_p}{x_n}})^2}\frac{\partial}{\partial x_n},
\\\ |\nabla f_p|&=&1,  
\end{eqnarray*} 
and
\begin{eqnarray*}
\nabla f_p\cdot\nabla u_i&=&x_n^2\nabla^0f_p\cdot\nabla^0 u_i
\\&=&x_n^2(\frac{\partial f_p}{\partial x_p}\frac{\partial u_i}{\partial x_p}+\frac{\partial f_p}{\partial x_n}\frac{\partial u_i}{\partial x_n})
\\&=&\frac{1}{\sqrt{1+(\frac{x_p}{x_n})^2}}(x_n\frac{\partial u_i}{\partial x_p}-x_n\frac{x_p}{x_n}\frac{\partial u_i}{\partial x_n}),
\\  \Delta f_p&=&(n-1)\frac{sinh f_p}{cosh f_p}.
\end{eqnarray*}
Define $f_n=\ln x_n$, then using formulas (\ref{con0}) and (\ref{con1}) we have
$$|\nabla f_n|=1, \Delta f_n=1-n.$$

Denote by $u_{i,p}=\frac{\partial u_i}{\partial x_p}$ and $u_{i,n}=\frac{\partial u_i}{\partial x_n}$ in the sequel. Putting $f_p$ $(p=1,\cdots, n-1)$ into Cheng-Yang's inequality, we get
\begin{eqnarray}\label{main ine4}
&&\sum_{i=1}^k(\lambda_{k+1}-\lambda_i)^2\nonumber
\\&\leq&\sum_{i=1}^k(\lambda_{k+1}-\lambda_i)\int_\Omega[4(\nabla f_p\cdot\nabla u_i)^2+4\nabla f_p\cdot\nabla u_i u_i\Delta f_p+(\Delta f_p)^2]\nonumber
\\&=&\sum_{i=1}^k(\lambda_{k+1}-\lambda_i)\int_\Omega[4(\nabla f_p\cdot\nabla u_i)^2+2\nabla f_p\cdot\nabla u_i^2\Delta f_p+u_i^2(\Delta f_p)^2]\nonumber
\\&=&\sum_{i=1}^k(\lambda_{k+1}-\lambda_i)\int_\Omega[4(\nabla f_p\cdot\nabla u_i)^2-2u_i^2\nabla f_p\cdot\nabla\Delta f_p-u_i^2(\Delta f_p)^2]\nonumber
\\&=&\sum_{i=1}^k(\lambda_{k+1}-\lambda_i)[4\int_\Omega\frac{1}{1+{v_p^2}}(x_nu_{i,p}-v_px_nu_{i,n})^2\nonumber
\\&-&(n-1)^2\int_\Omega\frac{v_p^2}{1+v_p^2}u_i^2-2(n-1)\int_\Omega\frac{1}{1+v_p^2}u_i^2],
\end{eqnarray}
where  $v_p:=\frac{x_p}{x_n},$ and in the last inequality we used 
\begin{eqnarray*}
\nabla f_p\cdot\nabla\Delta f_p&=&(n-1)\nabla f_p\nabla\frac{\sinh f_p}{\cosh f_p}
\\&=&(n-1)\frac{1}{\cosh^2f_p}|\nabla f_p|^2
\\&=&(n-1)\frac{1}{1+v_p^2},
\\(\Delta f_p)^2&=&(n-1)^2\frac{v_p^2}{1+v_p^2}.
\end{eqnarray*}
Putting $f_n$ into Cheng-Yang's inequality, we get
$$\sum_{i=1}^k(\lambda_{k+1}-\lambda_i)^2\int_\Omega u_i^2\leq\sum_{i=1}^k(\lambda_{k+1}-\lambda_i)\int_\Omega\left(2x_nu_{i,n}+u_i(1-n)\right)^2.$$
Note that
\begin{eqnarray*}
\int_\Omega\left(2x_n\frac{\partial u_i}{\partial x_n}+u_i(1-n)\right)^2=4\int_\Omega\left(x_n^2u_{i,n}^2-u_i^2\frac{(n-1)^2}{4}\right),
\end{eqnarray*}
we obtain
\begin{eqnarray}\label{main ine5}
&&\sum_{i=1}^k(\lambda_{k+1}-\lambda_i)^2\nonumber
\\&\leq&4\sum_{i=1}^k(\lambda_{k+1}-\lambda_i)\left(\int_\Omega x_n^2u_{i,n}^2-\frac{(n-1)^2}{4}\right).
\end{eqnarray}
 Summing over $p=1,...,n-1$ in (\ref{main ine4}) and (\ref{main ine5}), we get
\begin{eqnarray*}
&&n\sum_{i=1}^k(\lambda_{k+1}-\lambda_i)^2
\\&\leq&\sum_{i=1}^k(\lambda_{k+1}-\lambda_i)[4\sum_{p=1}^{n-1}\int_\Omega\frac{1}{1+{v_p^2}}
(x_nu_{i,p}-v_px_nu_{i,n})^2+4\int_\Omega x_n^2u_{i,n}^2
\\&-&(n-1)^2-(n-1)^2\sum_{p=1}^{n-1}\int_\Omega\frac{v_p^2}{1+v_p^2}u_i^2-2(n-1)\sum_{p=1}^{n-1}\int_\Omega\frac{1}{1+v_p^2}u_i^2].
\end{eqnarray*}
 Now by Cauchy-Schwarz inequality, for any $\epsilon>0$
\begin{eqnarray*}
&&\sum_{p=1}^{n-1}\frac{1}{1+v_p^2}
(x_nu_{i,p}-v_px_nu_{i,n})^2
\\&\leq&\sum_{p=1}^{n-1}\frac{1+\epsilon}{1+v_p^2}x_n^2u_{i,p}^2+(1+\frac{1}{\epsilon})\sum_{p=1}^{n-1}\frac{v_p^2}{1+v_p^2}x_n^2u_{i,n}^2
\\&\leq&(1+\epsilon)\sum_{p=1}^{n-1}x_n^2u_{i,p}^2+(1+\frac{1}{\epsilon})\sum_{p=1}^{n-1}\frac{v_p^2}{1+v_p^2}x_n^2u_{i,n}^2
\\&\leq&(1+\epsilon)\sum_{p=1}^{n-1}x_n^2u_{i,p}^2+\epsilon x_n^2u_{i,n}^2,
\end{eqnarray*}
where in the last inequality we used the assumption 
$\max_{x\in\Omega}\sum_{p=1}^{n-1}\frac{(\frac{x_p}{x_n})^2}{1+(\frac{x_p}{x_n})^2}\leq\frac{\epsilon^2}{1+\epsilon}.$
Therefore \begin{eqnarray*}
&&n\sum_{i=1}^k(\lambda_{k+1}-\lambda_i)^2
\\&\leq&\sum_{i=1}^k(\lambda_{k+1}-\lambda_i)[4(1+\epsilon)\sum_{p=1}^{n-1}\int_\Omega
x_n^2u_{i,p}^2+4(1+\epsilon)\int_\Omega x_n^2u_{i,n}^2
\\&-&(n-1)^2-(n-1)^2\sum_{p=1}^{n-1}\int_\Omega\frac{v_p^2}{1+v_p^2}u_i^2-2(n-1)\sum_{p=1}^{n-1}\int_\Omega\frac{1}{1+v_p^2}u_i^2].
\end{eqnarray*}
Note that $\sum_{p=1}^{n-1}x_n^2u_{i,p}^2+x_n^2u_{i,n}^2=|\nabla u_i|^2$. Thus
\begin{eqnarray*}
&&\sum_{i=1}^k(\lambda_{k+1}-\lambda_i)^2
\\&\leq&\frac{4(1+\epsilon)}{n}\sum_{i=1}^k(\lambda_{k+1}-\lambda_i)\{\lambda_i-
\\&\frac{1}{4(1+\epsilon)}&[(n-1)^2+(n-1)^2\sum_{p=1}^{n-1}\int_\Omega\frac{v_p^2}{1+v_p^2}u_i^2+2(n-1)\sum_{p=1}^{n-1}\int_\Omega\frac{1}{1+v_p^2}u_i^2]\}.
\end{eqnarray*}
In the end, it is easy to check that if $0<\epsilon\leq1 (n=2)$ or $0<\epsilon\leq 2 (n\geq3)$, we have 
\begin{eqnarray*}
\frac{1}{4(1+\epsilon)}[(n-1)^2+(n-1)^2\sum_{p=1}^{n-1}\int_\Omega\frac{v_p^2}{1+v_p^2}u_i^2+2(n-1)\sum_{p=1}^{n-1}\int_\Omega\frac{1}{1+v_p^2}u_i^2]\geq\frac{(n-1)^2}{4}.
\end{eqnarray*}
This completes the proof of Theorem \ref{main thm2}.

\vspace{0.5 cm}

\textbf{Declarations.} 
\\
\\ \textbf{Data availability statements.} No data sets were generated or analysed during
the current study.

\vspace{0.5 cm}
\textbf{Acknowledgement.} This paper is supported by the National Natural Science Foundation of China (Grant nos. 12271069, 12571055). The author thank Professor Qingming Cheng for stimulating discussions on this subject. Part of this work was carried out when the author visited Xinyang Normal University. The author would like to thank Professor Yingbo Han and Professor Jiabin Yin for their invitation, stimulating discussions and Xinyang University for the hospitality. 
	{}
\vspace{1cm}\sc
	
Yong Luo

Mathematical Science Research Center of Mathematics,

Chongqing University of Technology,

Chongqing, 400054, China

{\tt yongluo-math@cqut.edu.cn}

\end{document}